\newcommand{\beqa}{\begin{eqnarray}}
\newcommand{\eeqa}{\end{eqnarray}}
\newtheorem{Theorem}{Theorem}[section]
\newtheorem{theorem}[Theorem]{Theorem}
\newtheorem{lemma}[Theorem]{Lemma}
\newtheorem{corollary}[Theorem]{Corollary}
\newtheorem{example}[Theorem]{Example}
\newtheorem{definition}[Theorem]{Definition}
\newcommand{\pf}{\mathrm{Pf}} 
\newcommand{\beq}{\begin{equation}}
\newcommand{\eeq}{\end{equation}}
\def\NN{{\mathbb N}}
\def\NN{{\mathbb N}}
\title[Pfaffians and nonintersecting paths in graphs with cycles]{Pfaffians and nonintersecting paths in graphs with cycles: Grassmann algebra methods}
\author[S. Carrozza and A. Tanasa]{S. Carrozza\thanks{scarrozza@perimeterinstitute.ca}\addressmark{1} 
\and A. Tanasa\thanks{ntanasa@u-bordeaux.fr}\addressmark{2}
}
\address{\addressmark{1}Perimeter Institute for Theoretical Physics, Waterloo, Canada \\ 
\addressmark{2}Univ. Bordeaux, Talence, France, EU\\
H. Hulubei National Inst. Phys. Nucl. Engineering, M\u{a}gurele, Romania, EU\\
I. U. F., Paris, France, EU
}
\abstract{After recalling the definition of Grassmann algebra and elements of Grass\-mann--Berezin calculus, we use the expression of Pfaffians as Grassmann integrals to generalize a series of formulas relating generating functions of paths in digraphs to Pfaffians. We start with the celebrated Lindstr\"om-Gessel-Viennot formula, which we derive in the general case of a graph with cycles. We then make further use of Grassmann algebraic tools to prove a generalization of the results of (Stembridge 1990). Our results, which are applicable to graphs with cycles, are formulated in terms of systems of nonintersecting paths and nonintersecting cycles in digraphs.
}
\keywords{Grassmann algebra, Grassmann-Berezin calculus, Pfaffians, nonintersecting paths, nonintersecting cycles.}
\begin{document}

\maketitle

\section{Introduction and motivation}
\label{sec:in}

Grassmann algebra (see, for example \cite{feldman} for a general reference) is extensively used in mathematical physics to describe fermions, which are elementary particles obeying the so-called Fermi-Dirac statistics. The generators of the algebra are called Grassmann variables and they obey anti-commuting relations (unlike real or complex variables, which obey commuting relations, and which are used in mathematical physics to describe bosons). 
One can then define the so-called Grassmann integral, which is a linear form on the Grassmann algebra. 

The rules of Grassmann calculus lead to very 
interesting developments outside the original field of mathematical physics. Thus, in combinatorics, Grassmann algebra 
is used to derive elegant expressions for determinants and Pfaffians.
These expressions have already been used to prove
the contraction-deletion formula of the Tutte and Bollob\'as-Riordan polynomials \cite{KRTW}, matrix-tree-like theorems \cite{malek}, 
generalizations of the classical Cauchy-Binet formula 
\cite{css}
 and so on.


In \cite{gascom}, these techniques were used to give a new proof of the Lindstr\"{o}m-Gessel-Viennot (LGV) formula for graphs with cycles (formula first found by means of purely combinatorial techniques in \cite{talaska} -- the interested reader should also consult \cite{lalonde}
for a combinatorial proof of Jacobi's equality relating a cofactor of a matrix with the complementary factor of its inverse).

In this paper, we use Grassmann algebra to give, first,
a reformulation of the LGV lemma proof already proposed in \cite{gascom}.
Our proof is identical to that of \cite{pzinn}, but we show that it generalizes to graphs with cycles. We then use the same Grassmann techniques to give new proofs of Stembridge's identities relating appropriate graph Pfaffians to sum over nonintersecting paths - see \cite{stem}.
Our results go further than the ones of \cite{stem}, because Grassmann algebra techniques naturally extend (without any cost!) to graphs with cycles. 
We thus obtain, instead of sums over nonintersecting paths, sums over nonintersecting paths and nonintersecting cycles. In the last section, we give a generalization of all these results.

It is worth mentioning here that our proofs do not require any clever involution argument (as it is the case for the "classical" proofs of \cite{GV}, \cite{talaska} or \cite{stem}). The vanishing contribution of intersecting paths and cycles is an immediate consequence of the basic properties of Grassmann algebras. 

\section{Grassmann algebras and Grassmann-Berezin calculus}

\subsection{The Grassmann algebra}

Let $R$ be a unital commutative ring such that $\mathbb{Q}\subseteq R$, and let $\chi_1,\ldots,\chi_m$ ($m\in\NN)$ be a collection of letters.

\begin{definition}
The {\bf Grassmann algebra} $R[\chi_1,\ldots, \chi_m]$, or simply $R[\chi]$, is the quotient of the free noncommutative $R-$algebra with generators  $\chi_1,\ldots,\chi_m$, by the two-sided ideal generated by the expressions  $\chi_i \chi_j + \chi_j \chi_i$, for all  
$i,  j=1,\ldots , m$.
\end{definition}

Thus, the generators of the Grassmann algebra $\chi_1, ..., \chi_m$, also called
Grassmann (or anti-commuting) variables 
satisfy anticommutation relations: 
\beqa
\label{defg}
 \chi_i \chi_j +\chi_j \chi_i=0, \quad \forall  i,  j=1,\ldots , m \, .
\eeqa
As an immediate consequence (since $2$ is invertible in $R$), one has the following crucial identity (which encodes Pauli's exclusion principle in the physics literature):
\beqa
\label{defg2}
 \chi_i^2= 0, \quad \forall  i=1,\ldots , m \, .
\eeqa

One can then easily prove that a general element of the Grassmann algebra 
can be uniquely written as
\beq
\label{deff}
f(\chi) = \sum_{n=0}^{m} \sum_{1 \leq i_1< \ldots \, <i_n \leq m} a_{i_1\ldots i_n} \chi_{i_1} \ldots \chi_{i_n} \, ,
\eeq
with $a_{i_1 \ldots i_n}\in R$.

The multiplication rule for Grassmann monomials is
\begin{equation}\label{multiplication}
(\chi_{i_1} \ldots \chi_{i_n}) (\chi_{j_1} \ldots \chi_{j_p}) = \left\{ 
\begin{split} &0 \qquad &\mathrm{if} \quad \{ i_1, \ldots , i_n \} \cap \{ j_1, \ldots , j_p \} \neq \emptyset \\
 &\mathrm{sgn}(k) \, \chi_{k_1} \ldots \chi_{k_{n+p}} & \mathrm{otherwise} 
\end{split}\right.\, ,
\end{equation}
where $1 \leq i_1< \ldots \, <i_n \leq m$, $1 \leq j_1< \ldots \, <j_n \leq m$, and $k = (k_1, \ldots , k_{n+p})$ is the permutation of $(i_1, \ldots , i_n, j_1 , \ldots , j_{p})$ such that $k_1 < \ldots < k_{n+p}$. The multiplication rule for general Grassmann algebra elements follows by distributivity. 
Note that Grassmann variables and $R-$variables commute.

The exponential of a Grassmann function $f$ is defined by the usual formula:
\begin{equation}
\label{expg}
   \exp({f(\chi)}) := \sum_{p = 0}^{+ \infty} \frac{1}{p!}\left( f(\chi)\right)^p\,.
\end{equation}
Note that this is a polynomial in $\chi$ (as a consequence of \eqref{defg} and of the fact that the number of generators is finite).
For example, one has:
\beq
\exp ({\chi_{i_1} \ldots \chi_{i_n}}) = 1 + \chi_{i_1} \ldots \chi_{i_n}.
\eeq

\subsection{Grassmann-Berezin calculus; Pfaffians as Grassmann integrals}

Following \cite{berezin},  we define the {\bf Grassmann integral} $\int d\chi \equiv \int d\chi_m \ldots d\chi_1 $ as the unique linear map from 
$R[\chi_1, \ldots, \chi_m]$ to $R$ such that
\beqa
\label{intg}
  \int d\chi \, \chi_1 \ldots \chi_m = 1\,,  \qquad \mathrm{and}\quad \int d\chi \, \chi_{i_1} \ldots \chi_{i_n} = 0 \quad \mathrm{if} \quad n < m\,.
  \eeqa
Thus, for $f\in R[\chi_1 \ldots \chi_m]$ written as in \eqref{deff}, one has:
\beqa
\int d \chi \, f(\chi) = a_{12\ldots m}\,.
\eeqa

\begin{example}
\label{expfaff}
Let  $\chi_1$ and $\chi_2$ be two independent Grassmann variables. One has:
\beq\label{rule_int}
\int d\chi_1 d\chi_2 \exp (-\frac 12  \chi_1 a \chi_2+ \frac 12 \chi_2 a \chi_1)= 
\int d\chi_1 d\chi_2 (1- \frac 12  \chi_1 a \chi_2+ \frac 12 \chi_2 a \chi_1)=a \, .
\eeq
\end{example}




If $m$ is an even integer and $A$ is an $m\times m$ skew-symmetric matrix with coefficients in $R$, then its Pfaffian 
writes as: 
\beqa
\label{pfaffeq}
\pf (A)=\int d\chi_1 \ldots d \chi_m \exp({-\frac 12 \sum_{i,j=1}^m \chi_i A_{ij} \chi_j}) \, .
\eeqa
Example \ref{expfaff} above is thus an illustration of this formula for the matrix $A=\begin{pmatrix}
0 & - a \\ a & 0 \end{pmatrix}$.

Finally, let now $M$ be an $n-$dimensional square matrix, again with entries in $R$, and 
consider $2n$ Grassmann variables $\bar \chi_i,\chi_i$, $i=1,\ldots, n$. Note that the conjugate notation has nothing to do with complex conjugation.
One has:
\beqa\label{det}
\det M = \int d\bar \chi_N d\chi_N \ldots d\bar \chi_1 d\chi_1 \,  \exp\left(-\sum_{i,j=1}^N \bar\chi_i M_{ij} \chi_j  \right) \, .
\eeqa
Formulas \eqref{pfaffeq} and \eqref{det} above are of course compatible with the standard relation between determinants and Pfaffians:
$\pf\begin{pmatrix}
0 & M \\
- {}^tM & 0
\end{pmatrix} = (-1)^{\frac{N(N-1)}{2}} \det M \, .
$


\section{On Grassmann Gaussian measures}

In this section we present a preliminary identity, which we will use extensively in the rest of the paper.


Let $S$ be a $2n \times 2n$ skew-symmetric, invertible matrix. 
Since the Pfaffian is a square-root of the determinant, this implies in particular that $
\int d \chi \exp\left( - \frac{1}{2} \sum_{i,j = 1}^{2n} \chi_i S_{ij}^{-1} \chi_j\right) \neq 0\, .
$
Following \cite{feldman}, one defines a notion of (normalized) Gaussian measure associated to $S$:

\begin{definition}
The {\bf Gaussian measure} $d \mu_S$ associated to $S$ is given by the following formula:
\beq\label{gauss}
d \mu_S (\chi) :=  d\chi\, \frac{\exp\left( - \frac{1}{2} \sum_{i,j = 1}^{2n} \chi_i S_{ij}^{-1} \chi_j\right)}{\int d \eta \exp\left( - \frac{1}{2} \sum_{i,j = 1}^{2n} \eta_i S_{ij}^{-1} \eta_j\right)} \, .
\eeq
\end{definition}

Let us introduce the following notation: for any square matrix $M$ of size $p$, and any subsets $\mathcal{A}, \mathcal{B} \subset \{1, \ldots, p\}$, we will denote by $M_{\mathcal{A} \mathcal{B}}$ the matrix obtained from $M$ by deletion of all lines (resp. columns) which are not in $\mathcal{A}$ (resp. $\mathcal{B}$).  

The moments of Grassmann Gaussian measures are related to Pfaffians in the following way:
\begin{lemma}
(Proposition $I. 19$ of \cite{feldman})
\label{wick}
Let $S$ be a $2n \times 2n$ skew-symmetric and invertible matrix. For any even $k \geq 2$, and for any subset $\mathcal{I} = \{i_1, \ldots , i_k \}$ with $1\leq i_1 < \ldots < i_k \leq 2n$, one has:
\beq\label{wickeq}
\int d\mu_S (\chi) \chi_{i_1} \ldots \chi_{i_k} = \pf \left( S_{\mathcal{I}\mathcal{I}} \right)\,.
\eeq
\end{lemma}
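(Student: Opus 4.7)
The plan is to prove the lemma via the standard source/generating functional trick. Introduce an auxiliary collection $\eta_1,\ldots,\eta_{2n}$ of Grassmann variables, anticommuting among themselves and with the $\chi_j$'s, and set
\[
Z(\eta) := \int d\mu_S(\chi)\, \exp\Bigl(\sum_{i=1}^{2n}\eta_i\chi_i\Bigr).
\]
Any moment $\int d\mu_S(\chi)\chi_{i_1}\cdots\chi_{i_k}$ can be read off as the coefficient of $\eta_{i_1}\cdots\eta_{i_k}$ in the Taylor expansion of $Z(\eta)$, up to a fixed combinatorial sign produced when reordering the $\eta$'s past the $\chi$'s in $(\sum_i\eta_i\chi_i)^k$ (an overall factor $(-1)^{k(k-1)/2}$).

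The second step is to evaluate $Z(\eta)$ in closed form by completing the square. The Grassmann measure $d\chi$ is translation-invariant, so I perform the shift $\chi_i \mapsto \chi_i - \sum_j S_{ij}\eta_j$. The linear source term $\eta^T\chi$ cancels against the cross term $-\tfrac{1}{2}\chi^T S^{-1}(S\eta)-\tfrac{1}{2}(S\eta)^T S^{-1}\chi$, where the crucial simplification uses the skew-symmetry of $S^{-1}$ together with $S S^{-1}=\mathbb{1}$. The residual term in $\eta$ alone combines with the shifted source to give
\[
Z(\eta) = \exp\Bigl(-\tfrac{1}{2}\sum_{i,j=1}^{2n}\eta_i S_{ij}\eta_j\Bigr).
\]

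The third step is combinatorial. Since $\eta_i^2=0$, the coefficient of the ordered monomial $\eta_{i_1}\cdots\eta_{i_k}$ comes exclusively from the term of order $p=k/2$ in the expansion of the exponential. Collecting contributions from all ways of splitting $\mathcal{I}=\{i_1,\ldots,i_k\}$ into $k/2$ ordered pairs and reordering the $\eta$'s into the standard order produces the signed sum over perfect matchings of $\mathcal{I}$ weighted by the corresponding products $S_{i_a i_b}$; this is the definition of $\pf(S_{\mathcal{I}\mathcal{I}})$. Matching this with the left-hand side, after accounting for the overall sign from Step 1, yields the stated equality.

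The main obstacle is sign bookkeeping, which enters at three stages: the Grassmann completion of the square (using $(S^{-1})^T=-S^{-1}$ and the anticommutation of $\eta$ with $\chi$); the reordering of $(\eta^T\chi)^k$ to extract a given moment; and the combinatorial recognition of the Pfaffian in the final step. To keep these under control, I would first verify the computation in full for the base case $k=2$, where $\pf(S_{\{i_1,i_2\}\{i_1,i_2\}})=S_{i_1 i_2}$ and all signs can be checked explicitly, thereby fixing the conventions. The general case then follows mechanically, since the remaining manipulations are dictated by the same anticommutation rules used at order $k=2$.
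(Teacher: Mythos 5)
Your approach is sound, but note first that the paper does not actually prove this lemma: it is quoted verbatim as Proposition I.19 of the cited reference (Feldman--Kn\"orrer--Trubowitz), so there is no internal proof to compare against. Your generating-functional argument is the standard proof of that proposition, and every step you outline is correct: translation invariance of the Berezin integral justifies the shift $\chi \mapsto \chi - S\eta$, the cross terms cancel exactly because $S^{-1}$ is skew-symmetric and the $\eta_i$ anticommute with the $\chi_j$, and one indeed lands on $Z(\eta)=\exp\bigl(-\tfrac12 \eta^T S\eta\bigr)$. The one point where "follows mechanically" deserves to be made explicit is the final sign cancellation: writing $\exp\bigl(\sum_i\eta_i\chi_i\bigr)=\prod_i(1+\eta_i\chi_i)$ (the factors commute, being even), the coefficient of $\eta_{i_1}\cdots\eta_{i_k}$ in $Z(\eta)$ is $(-1)^{k(k-1)/2}\int d\mu_S(\chi)\,\chi_{i_1}\cdots\chi_{i_k}$, while extracting the same coefficient from $\exp\bigl(-\tfrac12\eta^T S\eta\bigr)$ via the paper's Pfaffian formula \eqref{pfaffeq}, applied to the subalgebra generated by $\{\eta_i\}_{i\in\mathcal{I}}$ and the submatrix $S_{\mathcal{I}\mathcal{I}}$, produces exactly the same factor $(-1)^{k(k-1)/2}$; the two cancel, which is why no residual sign survives for any even $k$ and no case analysis beyond $k=2$ is needed. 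With that observation added, your proof is complete and self-contained, which is arguably a useful addition given that the paper leans on this lemma for every theorem it proves.
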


As already noticed in \cite{feldman}, 
one may extend the notion of Gaussian measure to non-invertible skew-symmetric matrices $S$, in which case $d \mu_S$ is \emph{defined} by \eqref{wick}.



\section{LGV formula for graphs with cycles}

In this section, we present a simplified version of the proof given in \cite{gascom} of the LGV lemma for graphs with cycles. 

Let $G$ be a digraph (or a directed graph) which allows loops or multiple edges.
Let $V =\{ v_1, \ldots, v_n\}$ be the set of vertices of the digraph, which we equip with an order relation $<$ such that $v_1 < \ldots < v_n$\footnote{To simplify notations, we will often identify the ordered set $V$ with $\{ 1, \ldots n\}$ itself.}.
To each edge $e$ of the digraph, we assign a {\bf weight} denoted by $w_e$. One further assumes that the variables $w_e$ thus defined are commuting variables. 

Let us consider a collection of edges $P = (e_1, e_2, \ldots , e_k)$ such that one can reach a vertex $v'$ from a vertex $v$ by successively traversing the edges of the collection $P$ in the specified order.
We then say that $P$ is a {\bf path} in the digraph, from the vertex $v$ to the  vertex $v'$.

 
Let the product 
$\mathrm{wt}(P):=\prod_{k=1}^{m} w_{e_k}$ 
denote the {\bf weight} of a given path $P=(e_1,\ldots , e_k)$. We then construct the {\bf weight path matrix} 
$M_{ij}:=\underset{{P:v_i \rightsquigarrow v_j}}{\sum} \mathrm{wt}(P)
$, where $1\leq i,j\leq n$ and 
the sum runs over paths $P$ connecting the vertex $v_i$ to the vertex $v_j$.
Note that the entries of the matrix $M$ are considered as formal power series in the edge weights $w_e$. Matrix equations such as $M = (1 - A)^{-1}$, where $A$ is the weighted adjacency matrix\footnote{For any $1 \leq i,j \leq n$, $A_{ij} := \underset{e:v_i \rightsquigarrow v_j}{\sum} w_e$, where the sum runs over edges connecting $v_i$ to $v_j$.}
, are also meant in a formal power series sense. 

A {\bf cycle} is defined as a path from a vertex $v$ to itself (up to a change of source vertex $v$ along the path). 
The set of all possible collections of self-avoiding and pairwise vertex-disjoint cycles, including the empty collection, is denoted by $\mathcal{C}$. 
Let the weight and sign of $\mathbf{C} = (C_1, \ldots, C_k) \in \mathcal{C}$ be given by the following formulas:
\beq
\mathrm{wt}(\mathbf{C}):= \prod_{i=1}^k \mathrm{wt}(C_i)\,, \qquad \mathrm{} \qquad \mathrm{sgn}(\mathbf{C}):= (-1)^k \, .
\eeq
We further set $\mathrm{wt}(\mathbf{C}) =1$ and $ \mathrm{sgn}(\mathbf{C}) =1$ for the empty collection $\mathbf{C} = \emptyset$.

One considers now subsets of vertices 
${\cal A} = \{ a_1, \ldots , a_p \}$ and ${\cal B} = \{ b_1, \ldots , b_p \}$, with $a_1 < \ldots < a_p$ and $b_1 < \ldots < b_p$.
A collection of paths $\mathbf{P} = (P_1, \ldots, P_p)$, where the path $P_i$ connects the vertex $a_i$ to the vertex $b_{\sigma_{\mathbf{P}} (i)}$ (where $\sigma_{\mathbf{P}}$ is some permutation), 
is called a {\bf $p-$path} from the subset $\cal A$ to the subset $\cal B$. 
Let 
\beq
\mathrm{wt}(\mathbf{P}):= \prod_{i=1}^k \mathrm{wt}(P_i)\,, \qquad \mathrm{} \qquad \mathrm{sgn}(\mathbf{P}):= \mathrm{sgn}(\sigma_{\mathbf{P}}) \, ,
\eeq
denote the weight and resp. the sign of the $p-$path $\mathbf{P}$. One says that  
$\mathbf{P}$ is {\bf self-avoiding} if: each path $P_i$ is self-avoiding and if, for each pair of paths $P_i$ and $P_j$ with $i\neq j$, $P_i$ and $P_j$ are vertex-disjoint. 

Let us now give the following definition:
\begin{definition}
\label{flow}
A {\bf self-avoiding flow} from $\cal A$ to $\cal B$ is a pair $(\mathbf{P}, \mathbf{C})$ such that  
$\mathbf{C} \in \mathcal{C}$, $\mathbf{P}$
is a self-avoiding $p$-path from $\cal A$ to $\cal B$, 
  and $\mathbf{P}$ and $\mathbf{C}$ are vertex disjoint. The set of self-avoiding flows from $\cal A$ to $\cal B$ is denoted by $\mathcal{F}_{{\cal A},{\cal B}}$. We furthermore identify $\mathcal{F}_{\emptyset,\emptyset}$ with $\mathcal{C}$.
\end{definition}

Given $2n$ independent Grassmann variables $\theta_i$ and $\bar\theta_i$ with $i$ running from $1$ to $n$, we introduce the notations
\beqa
[d \theta \cdot d \bar\theta] := d \theta_n d \bar\theta_n \ldots d \theta_1 d \bar\theta_1\,, \qquad  [\bar\theta_{\mathcal{B}} \cdot \theta_{\mathcal{A}} ] := \bar\theta_{b_1} \theta_{a_1} \ldots \bar\theta_{b_p} \theta_{a_p}\,,
\eeqa
where $\mathcal{A}$ and $\mathcal{B}$ are sets as above. 

One then has: 
\begin{lemma}\label{lemma:paths}
Let $\mathcal{A}$ and $\mathcal{B}$ be two subsets of $\{ 1, \ldots, n\}$ with cardinal $p$. Denote by $a_1 < \ldots < a_p$ the elements of $\mathcal{A}$ and $b_1 < \ldots < b_p$ the elements of $\mathcal{B}$. Then:
\beqa\label{basic_int}
\int [d \theta \cdot d \bar\theta]  \, [\bar\theta_{\mathcal{B}} \cdot \theta_{\mathcal{A}} ] \,\exp\left( \sum_{i,j = 1}^N \bar\theta_i M^{-1}_{ij} \theta_j \right) = \underset{(\mathbf{P}, \mathbf{C})\in \mathcal{F}_{{\cal A},{\cal B}}}{\sum} \mathrm{sgn}(\mathbf{P})\mathrm{wt}(\mathbf{P}) \, \mathrm{sgn}(\mathbf{C})\mathrm{wt}(\mathbf{C}) \, .
\eeqa
\end{lemma}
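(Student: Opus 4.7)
The plan is to expand the exponential into a finite product, use the vanishing of the square of any Grassmann variable to identify the surviving terms combinatorially, and then track signs. Since $M=(1-A)^{-1}$, we have $M^{-1}=1-A$, so
\[
\sum_{i,j=1}^{n}\bar\theta_iM^{-1}_{ij}\theta_j = \sum_{i=1}^n \bar\theta_i\theta_i - \sum_{e\in E} w_e\,\bar\theta_{s(e)}\theta_{t(e)},
\]
where $s(e),t(e)$ are the endpoints of $e$. Each summand is Grassmann-even of square zero, and any two summands commute, so the exponential factorizes:
\[
\exp\!\Bigl(\sum_{i,j}\bar\theta_i M^{-1}_{ij}\theta_j\Bigr) = \prod_{i=1}^n(1+\bar\theta_i\theta_i)\,\prod_{e\in E}(1 - w_e\bar\theta_{s(e)}\theta_{t(e)}).
\]
Expanding this product indexes its terms by pairs $(V_0,E_0)$ with $V_0\subseteq V$ and $E_0\subseteq E$, where $V_0$ records the selected diagonal factors and $E_0$ the selected edges.

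After multiplying by $[\bar\theta_{\mathcal{B}}\cdot\theta_{\mathcal{A}}]$, I would argue that the Grassmann integral vanishes unless every variable $\bar\theta_i$ and $\theta_i$ occurs exactly once in the integrand. Counting occurrences at each vertex $i$ then forces: if $i\in V_0$ then $i\notin\mathcal{A}\cup\mathcal{B}$ and no edge of $E_0$ touches $i$; otherwise $\mathbf{1}_{\mathcal{A}}(i)+\deg^{-}_{E_0}(i)=1$ and $\mathbf{1}_{\mathcal{B}}(i)+\deg^{+}_{E_0}(i)=1$, where $\deg^{\pm}_{E_0}$ denote in- and out-degree in $E_0$. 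This is precisely the condition that $E_0$ assembles into a self-avoiding flow $(\mathbf{P},\mathbf{C})\in\mathcal{F}_{\mathcal{A},\mathcal{B}}$, with $V_0$ the complement of its vertex-support; nonvanishing terms are thereby in bijection with $\mathcal{F}_{\mathcal{A},\mathcal{B}}$.

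I would then check that each surviving monomial reorganizes to $\mathrm{sgn}(\mathbf{P})\mathrm{wt}(\mathbf{P})\mathrm{sgn}(\mathbf{C})\mathrm{wt}(\mathbf{C})\prod_{v\in V}(\bar\theta_v\theta_v)$, which integrates to $1$. For a single cycle of length $k$, the $k$ edge factors produce $(-1)^k\mathrm{wt}(C)\bar\theta_{u_0}\theta_{u_1}\bar\theta_{u_1}\cdots\bar\theta_{u_{k-1}}\theta_{u_0}$; shifting the trailing $\theta_{u_0}$ across $2(k-1)$ odd variables (sign $+1$) and applying $\theta_u\bar\theta_u=-\bar\theta_u\theta_u$ once per internal vertex reduces this to $-\mathrm{wt}(C)\prod_{v\in C}(\bar\theta_v\theta_v)$, so the full cycle product contributes $(-1)^r\mathrm{wt}(\mathbf{C})=\mathrm{sgn}(\mathbf{C})\mathrm{wt}(\mathbf{C})$. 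An analogous rearrangement for each length-$k_i$ path $P_i$ from $a_i$ to $b_{\sigma(i)}$ yields $-\mathrm{wt}(P_i)\bar\theta_{a_i}\bigl[\prod_l\bar\theta_{v_l^{(i)}}\theta_{v_l^{(i)}}\bigr]\theta_{b_{\sigma(i)}}$.

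The hardest step will be the sign arising when the total path factor is multiplied by $[\bar\theta_{\mathcal{B}}\cdot\theta_{\mathcal{A}}]=\prod_l(\bar\theta_{b_l}\theta_{a_l})$. Since the internal blocks $(\bar\theta_{v_l^{(i)}}\theta_{v_l^{(i)}})$ are Grassmann-even and commute out, this reduces to the symmetric-group identity
\[
\prod_{i=1}^p(\bar\theta_{a_i}\theta_{b_{\sigma(i)}})\prod_{l=1}^p(\bar\theta_{b_l}\theta_{a_l}) = (-1)^p\mathrm{sgn}(\sigma)\prod_{v\in\mathcal{A}\cup\mathcal{B}}(\bar\theta_v\theta_v),
\]
which I plan to prove by pulling all $\bar\theta$'s to the left and all $\theta$'s to the right, invoking $\theta_{b_{\sigma(1)}}\cdots\theta_{b_{\sigma(p)}}=\mathrm{sgn}(\sigma)\theta_{b_1}\cdots\theta_{b_p}$, and bookkeeping the parity of the reshuffling. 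Combined with the $(-1)^p$ from the path edge factors, this delivers the desired $\mathrm{sgn}(\mathbf{P})\mathrm{wt}(\mathbf{P})$; the degenerate case $\mathcal{A}\cap\mathcal{B}\neq\emptyset$ (length-zero paths) is handled directly since such a vertex receives its $\bar\theta_v\theta_v$ straight from $[\bar\theta_{\mathcal{B}}\cdot\theta_{\mathcal{A}}]$. Summing the resulting contribution $\mathrm{sgn}(\mathbf{P})\mathrm{wt}(\mathbf{P})\mathrm{sgn}(\mathbf{C})\mathrm{wt}(\mathbf{C})$ over $\mathcal{F}_{\mathcal{A},\mathcal{B}}$ then yields the right-hand side of \eqref{basic_int}.
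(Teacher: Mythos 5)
Your proposal follows exactly the route of the paper's own (very brief) proof: write $M^{-1}=1-A$, factor the exponential into $(1+\bar\theta_i\theta_i)$ and $(1-w_e\bar\theta_{s(e)}\theta_{t(e)})$ terms, observe that surviving monomials must contain each Grassmann variable exactly once and are therefore indexed by self-avoiding flows, and then verify the weights. The sign bookkeeping, which the paper leaves entirely to the reader with the phrase ``one furthermore checks,'' is carried out correctly in your proposal (the net $-1$ per cycle, the cancellation of the two factors of $(-1)^p$ on the path sector, and the emergence of $\mathrm{sgn}(\sigma_{\mathbf{P}})$ from reordering the $\theta_{b_{\sigma(i)}}$'s), so this is a faithful, more detailed version of the same argument.
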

\begin{proof}
Let us give the main idea of the proof.
As noticed above, $M^{-1} = 1 - A$, and one can rewrite the exponential appearing in the integrand as a product of terms of the form $(1 + \bar\theta_i \theta_i)$ and $(1 - \bar\theta_i A_{ij} \theta_j)$. After expansion of this product, the monomials which do not vanish after Grassmann integration must contain each Grassmann variable once, and only once. This implies that they are indexed by elements of $\mathcal{F}_{{\cal A},{\cal B}}$. One furthermore checks that each flow is weighted as in the right-hand side of formula \eqref{basic_int}.
\end{proof}

The LGV formula for a digraph with cycles is:
\begin{theorem}
\label{GV:thm} (Theorem $2.5$ of \cite{talaska})
One has:
\begin{equation}\label{gv}
    \det (M_{{\cal A} {\cal B}}) =\frac{\underset{(\mathbf{P}, \mathbf{C})\in \mathcal{F}_{{\cal A},{\cal B}}}{\sum} \mathrm{sgn}(\mathbf{P})\mathrm{wt}(\mathbf{P}) \, \mathrm{sgn}(\mathbf{C})\mathrm{wt}(\mathbf{C})}{\underset{\mathbf{C} \in \mathcal{C}}{\sum} \mathrm{sgn}(\mathbf{C}) \mathrm{wt}(\mathbf{C})} \, .
\end{equation}
In particular, if $G$ is acyclic, one recovers the LGV Lemma \cite{Li}, \cite{GV}.
\end{theorem}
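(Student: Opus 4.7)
The plan is to express both the numerator and the denominator on the right-hand side of \eqref{gv} as Grassmann integrals via Lemma \ref{lemma:paths}, and then to identify their ratio with $\det(M_{\mathcal{A}\mathcal{B}})$ by means of a fermionic Wick-type formula. First I would invoke Lemma \ref{lemma:paths} as stated, which identifies the numerator with $\int [d\theta\cdot d\bar\theta]\,[\bar\theta_{\mathcal{B}}\cdot\theta_{\mathcal{A}}]\,\exp\bigl(\sum_{i,j}\bar\theta_i M^{-1}_{ij}\theta_j\bigr)$. Applying the same lemma with $\mathcal{A}=\mathcal{B}=\emptyset$, under the natural conventions $\mathcal{F}_{\emptyset,\emptyset}=\mathcal{C}$ and $[\bar\theta_{\emptyset}\cdot\theta_{\emptyset}]=1$, identifies the denominator with $\int [d\theta\cdot d\bar\theta]\,\exp\bigl(\sum_{i,j}\bar\theta_i M^{-1}_{ij}\theta_j\bigr)$, which by formula \eqref{det} equals $\det(M^{-1})=1/\det M$ up to a sign fixed by the ordering of the differentials. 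In particular, the denominator is a nonzero formal power series in the edge weights $w_e$, so the quotient is well defined.

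Second, the proof reduces to establishing the fermionic Gaussian-expectation identity
\[
\frac{\int [d\theta\cdot d\bar\theta]\,[\bar\theta_{\mathcal{B}}\cdot\theta_{\mathcal{A}}]\,\exp\bigl(\sum_{i,j}\bar\theta_i M^{-1}_{ij}\theta_j\bigr)}{\int [d\theta\cdot d\bar\theta]\,\exp\bigl(\sum_{i,j}\bar\theta_i M^{-1}_{ij}\theta_j\bigr)} = \det(M_{\mathcal{A}\mathcal{B}}),
\]
which is the $(\bar\theta,\theta)$-analog of Lemma \ref{wick}. My preferred route is a Grassmann change of variables: the formal translation $\theta_j\mapsto\theta_j+\sum_k M_{jk}\eta_k$ and $\bar\theta_i\mapsto\bar\theta_i+\sum_k\bar\eta_k M_{ki}$, with auxiliary Grassmann sources $\eta$ and $\bar\eta$, completes the quadratic form without altering the Berezin measure and converts the insertion $[\bar\theta_{\mathcal{B}}\cdot\theta_{\mathcal{A}}]$ into a polynomial in the sources whose top-degree coefficient is precisely $\det(M_{\mathcal{A}\mathcal{B}})$. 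Equivalently, one can expand $M^{-1}=1-A$ and integrate out the pairs $(\theta_i,\bar\theta_i)$ one at a time: the variables indexed by $\mathcal{A}\cup\mathcal{B}$ are forced to pair through the insertion, and after cancelling the common factor $\det(M^{-1})$ one is left with the Leibniz expansion of $\det(M_{\mathcal{A}\mathcal{B}})$.

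The main obstacle is sign bookkeeping. The differential ordering $d\theta_n d\bar\theta_n\cdots d\theta_1 d\bar\theta_1$ underlying $[d\theta\cdot d\bar\theta]$ differs from the convention of \eqref{det}, and the insertion $[\bar\theta_{\mathcal{B}}\cdot\theta_{\mathcal{A}}]$ interleaves the two index sets as $\bar\theta_{b_1}\theta_{a_1}\cdots\bar\theta_{b_p}\theta_{a_p}$ with $a_i$ and $b_i$ both in increasing order. One must verify that all these choices conspire to produce exactly $\det(M_{\mathcal{A}\mathcal{B}})$, without any parasitic factor of $(-1)$; because Lemma \ref{lemma:paths} already encodes a consistent global sign convention, this verification reduces to the case $p=1$, from which the general case follows by the standard Leibniz expansion of the determinant.
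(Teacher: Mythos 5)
Your overall architecture matches the paper's: both identify $\det(M_{\mathcal{A}\mathcal{B}})$ with the normalized Gaussian expectation $\frac{1}{Z}\int [d\theta\cdot d\bar\theta]\,[\bar\theta_{\mathcal{B}}\cdot\theta_{\mathcal{A}}]\exp\bigl(\sum_{i,j}\bar\theta_i M^{-1}_{ij}\theta_j\bigr)$ and then let Lemma \ref{lemma:paths} (applied to numerator and to denominator, the latter with $\mathcal{A}=\mathcal{B}=\emptyset$) do the combinatorial work. Where you diverge is in how that first identification is established. The paper does not prove the complex-fermion Wick formula directly: it packages the $2n$ variables into a single $2n\times 2n$ skew-symmetric matrix $S=\begin{pmatrix}0 & -M\\ {}^tM & 0\end{pmatrix}$, invokes the already-stated Lemma \ref{wick} with $\mathcal{I}=\mathcal{A}\cup\overline{\mathcal{B}}$, and uses the elementary identity $\pf(S_{\mathcal{I}\mathcal{I}})=(-1)^{p(p+1)/2}\det(M_{\mathcal{A}\mathcal{B}})$; the prefactor $(-1)^{p(p+1)/2}$ is then absorbed by reordering $\chi_{a_1}\cdots\chi_{a_p}\chi_{n+b_1}\cdots\chi_{n+b_p}$ into the interleaved insertion $[\bar\theta_{\mathcal{B}}\cdot\theta_{\mathcal{A}}]$. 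This detour through the Pfaffian is exactly the device that discharges, for free and for all $p$ at once, the sign bookkeeping you correctly identify as the main obstacle. Your alternative routes (source shift, or integrating out pairs) are standard and would work, but your resolution of the signs is the one thin spot: the claim that the verification ``reduces to the case $p=1$, from which the general case follows by the Leibniz expansion'' is not a proof, since the interleaving of $\bar\theta_{b_i}$ and $\theta_{a_i}$ produces reordering signs that only appear for $p\geq 2$ and must be checked to cancel against the signs in the Leibniz expansion. If you want to avoid redoing that computation, the cleanest fix is precisely the paper's: reduce to Lemma \ref{wick} via the doubled skew-symmetric matrix, so that the only sign to track is the single global factor $(-1)^{p(p+1)/2}$ relating $\pf(S_{\mathcal{I}\mathcal{I}})$ to $\det(M_{\mathcal{A}\mathcal{B}})$.
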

\begin{proof}
Let us define the skew-symmetric matrix $S$ (and its inverse) as:
\beqa
\label{s1}
S=
\begin{pmatrix}
0 & - M \\
 {}^t M & 0
\end{pmatrix}\,, \qquad 
S^{-1}=
\begin{pmatrix}
0 &  {}^t M^{-1} \\
- M^{-1} & 0
\end{pmatrix}\,.
\eeqa
Define $\overline{\mathcal{B}} := \{ n+ b \vert b \in \mathcal{B}\}$. We now use Lemma \ref{wick} with the matrix \eqref{s1} and $\mathcal{I} = \mathcal{A} \cup \overline{\mathcal{B}}$. Using the explicit expression of $S$, one finds
\beqa
S_{\mathcal{I} \mathcal{I}} = \begin{pmatrix}
0 & - M_{\mathcal{A} \mathcal{B}} \\
 {}^t M_{\mathcal{A} \mathcal{B}} & 0
\end{pmatrix}\,,
\eeqa
which implies that 
$\pf\left( S_{\mathcal{I}\mathcal{I}}\right) = (-1)^{\frac{p(p+1)}{2}} \det\left( M_{\mathcal{A} \mathcal{B}} \right).$
One then has:
\beqa
\det\left( M_{\mathcal{A} \mathcal{B}} \right) = (-1)^{\frac{p(p+1)}{2}} \int d \mu_S (\chi) \left( \chi_{a_1} \ldots \chi_{a_p}\right) \left( \chi_{n+b_1} \ldots \chi_{n+b_p}\right)\,.
\eeqa 
Using the explicit form \eqref{gauss} of the Gaussian measure, we can rewrite this identity as
\begin{align}
\det (M_{{\cal A} {\cal B}})
&= \frac{1}{Z}
\int [d\theta \cdot d \bar\theta ] \,
[\bar\theta_{\mathcal{B}}  \cdot \theta_{\mathcal{A}} ]
\exp \left( \sum_{i,j=1}^n \bar\theta_i M^{-1}_{ij} \theta_j\right)\, ,
\end{align}
where we have introduced the Grassmann variables $\theta_i = \chi_i$ and $\bar\theta_i = \chi_{n+i}$ ($i \in\{1, \ldots, n\}$), and the normalization 
\beqa
Z=\int [d\theta \cdot d \bar\theta ] \,
\exp \left( \sum_{i,j=1}^n \bar\theta_i M^{-1}_{ij} \theta_j\right)\,.
\eeqa
To conclude the proof, we apply Lemma \ref{lemma:paths} to both the numerator and denominator of $\det (M_{{\cal A} {\cal B}})$. The evaluation of $Z$ reduces to a sum over non-intersecting cycles, while the numerator contains all non-intersecting flows in $\mathcal{F}_{{\cal A},{\cal B}}$.
\end{proof}

\section{Stembridge's formulas for graphs with cycles}

In this section, we
consider the situation in which the paths start from a specified set $\cal A$ of $p$ vertices (as above) but their endpoints are not fixed. They are only restricted to lie within the graph region ${\cal I}\subset \{ 1, \ldots, n\}$, where the cardinal of $\mathcal{I}$ can be larger than $p$. Moreover, we assume the cardinal of $\mathcal{A}$ to be even: $p = 2 m$.

Following \cite{stem}, let us define the $n \times n$ matrix 
$Q^{\mathcal{I}}$ whose entries are:
\beqa
Q^{\mathcal{I}}_{ij}:= \sum_{k, l\in \mathcal{I}\vert k<l} \left( M_{ik} M_{jl} - M_{il} M_{jk} \right)\,.
\eeqa
Let us furthermore denote by 
\beqa
\mathcal{F}_{{\cal A}}^{\mathcal{I}}:= \bigcup_{\mathcal{B} \subset \mathcal{I}, \vert\mathcal{B}\vert = \vert\mathcal{A}\vert} \mathcal{F}_{{\cal A},\mathcal{B}}
\eeqa
the set of self-avoiding flows from $\cal A$ to $\mathcal{I}$ (defined as in Definition \ref{flow}). In this definition, the union over subsets $\mathcal{B} \subset \mathcal{I}$ is disjoint; we can thus naturally extend the weight and sign functions previously introduced to $\mathcal{F}_{{\cal A}}^{\mathcal{I}}$. Note that the sign is always computed relative to the natural order on the set of integers $\mathcal{A}$ and $\mathcal{B}$. 

We can now state the main result of this section:

\begin{theorem}
\label{thm31}
Let $\mathcal{A}$ and $\mathcal{I}$ be subsets of $\{ 1, \ldots , n\}$ such that $\vert\mathcal{A}\vert$ is even. One has: \beqa
\pf \left( Q^{\mathcal{I}}_{\mathcal{A} \mathcal{A}}\right) = 
\frac{\underset{(\mathbf{P}, \mathbf{C})\in \mathcal{F}_{{\cal A}}^{\mathcal{I}}}{\sum} \mathrm{sgn}(\mathbf{P})\mathrm{wt}(\mathbf{P}) \, \mathrm{sgn}(\mathbf{C})\mathrm{wt}(\mathbf{C})}{\underset{\mathbf{C} \in \mathcal{C}}{\sum} \mathrm{sgn}(\mathbf{C}) \mathrm{wt}(\mathbf{C})} \, .
\eeqa
\end{theorem}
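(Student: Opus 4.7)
The plan is to reduce Theorem~\ref{thm31} to two independent ingredients: the LGV formula (Theorem~\ref{GV:thm}) applied termwise for each endpoint subset $\mathcal{B}\subset\mathcal{I}$ with $|\mathcal{B}|=|\mathcal{A}|=p$, and the purely algebraic minor summation identity
\[
\pf(Q^{\mathcal{I}}_{\mathcal{A}\mathcal{A}}) \;=\; \sum_{\mathcal{B}\subset\mathcal{I},\,|\mathcal{B}|=p}\det(M_{\mathcal{A}\mathcal{B}}).
\]
Granted this identity, Theorem~\ref{GV:thm} gives $Z\,\det(M_{\mathcal{A}\mathcal{B}}) = \sum_{(\mathbf{P},\mathbf{C})\in\mathcal{F}_{\mathcal{A},\mathcal{B}}}\mathrm{sgn}(\mathbf{P})\mathrm{wt}(\mathbf{P})\mathrm{sgn}(\mathbf{C})\mathrm{wt}(\mathbf{C})$, with $Z = \sum_{\mathbf{C}\in\mathcal{C}}\mathrm{sgn}(\mathbf{C})\mathrm{wt}(\mathbf{C})$, and summing over all such $\mathcal{B}$ together with the built-in disjoint decomposition $\mathcal{F}_{\mathcal{A}}^{\mathcal{I}} = \bigsqcup_{\mathcal{B}}\mathcal{F}_{\mathcal{A},\mathcal{B}}$ recovers exactly the right-hand side of Theorem~\ref{thm31}. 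The entire proof thus reduces to establishing the minor summation identity above.

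To prove this identity in the Grassmann spirit of the paper, I first observe the factorization $Q^{\mathcal{I}}_{\mathcal{A}\mathcal{A}} = M_{\mathcal{A}\mathcal{I}}\,E\,(M_{\mathcal{A}\mathcal{I}})^{T}$, where $E$ is the $|\mathcal{I}|\times|\mathcal{I}|$ antisymmetric matrix with entries $E_{kl}=\mathrm{sgn}(l-k)$ for $k,l\in\mathcal{I}$. Using \eqref{pfaffeq}, I write
\[
\pf(Q^{\mathcal{I}}_{\mathcal{A}\mathcal{A}}) = \int d\theta_{a_1}\cdots d\theta_{a_p}\,\exp\!\left(-\frac{1}{2}\,\theta^{T} Q^{\mathcal{I}}_{\mathcal{A}\mathcal{A}}\theta\right),
\]
and introduce the odd Grassmann combinations $\psi_k := \sum_{i\in\mathcal{A}}M_{ik}\theta_i$ for $k\in\mathcal{I}$, so that the exponent collapses to $-\sum_{k<l\in\mathcal{I}}\psi_k\psi_l$. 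Since each $\psi_k$ is Grassmann odd (in particular $\psi_k^{2} = 0$), only the top-degree term of the exponential expansion (order $p/2$) survives the Grassmann integration, and grouping matchings supported on each $p$-subset $\mathcal{B}\subset\mathcal{I}$ yields the contribution $\pf(E_{\mathcal{B}\mathcal{B}})\,\psi_{b_1}\cdots\psi_{b_p}$. A short induction via cofactor expansion along the first row shows $\pf(E_{\mathcal{B}\mathcal{B}}) = 1$ for every even $\mathcal{B}$, while the Laplace-type identity $\psi_{b_1}\cdots\psi_{b_p} = \det(M_{\mathcal{A}\mathcal{B}})\,\theta_{a_1}\cdots\theta_{a_p}$ reduces the Grassmann integral to the desired sum over $\mathcal{B}$.

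The principal technical obstacle is sign bookkeeping. Several signs enter the computation: the factor $(-1)^{p/2}$ from the exponential of $-\sum_{k<l}\psi_k\psi_l$, the factor $(-1)^{p(p-1)/2}$ from $\int d\theta_{a_1}\cdots d\theta_{a_p}\,\theta_{a_1}\cdots\theta_{a_p}$ under the convention of \eqref{pfaffeq}, and the permutation signs arising when rewriting $\psi_{b_1}\cdots\psi_{b_p}$ in terms of the canonical monomial $\theta_{a_1}\cdots\theta_{a_p}$. These combine into an overall $(-1)^{p^2/2}$, which equals $+1$ for every even $p$, so the identity emerges with the correct overall sign. Confirming this cancellation together with the uniform value $\pf(E_{\mathcal{B}\mathcal{B}}) = 1$ is the one step where genuine care is required; once it is in place, the two-step structure glues everything together automatically.
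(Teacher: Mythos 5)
Your proof is correct, but it takes a genuinely different route from the paper. The paper proves the theorem in one shot: it builds a single $2n\times 2n$ skew-symmetric matrix $S$ whose inverse carries the block $B$ encoding $\mathcal{I}$, identifies $\pf(Q^{\mathcal{I}}_{\mathcal{A}\mathcal{A}})$ with a moment of the Gaussian measure $d\mu_S$ via Lemma \ref{wick}, and then expands the resulting Grassmann integral directly, with the sum over endpoint sets $\mathcal{B}\subset\mathcal{I}$ and the pairing identity $\sum_{\sigma}(-1)^{\mathrm{cr}(\sigma)}=1$ emerging from the $\bar\theta_i\bar\theta_j$ terms in the action, before invoking Lemma \ref{lemma:paths}. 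You instead factor the statement into (i) the Pfaffian minor-summation identity $\pf(Q^{\mathcal{I}}_{\mathcal{A}\mathcal{A}})=\sum_{\mathcal{B}\subset\mathcal{I},|\mathcal{B}|=p}\det(M_{\mathcal{A}\mathcal{B}})$, proved by a small Grassmann integral in the $\theta_a$ variables only via the factorization $Q^{\mathcal{I}}_{\mathcal{A}\mathcal{A}}=M_{\mathcal{A}\mathcal{I}}E\,{}^t(M_{\mathcal{A}\mathcal{I}})$, and (ii) a termwise application of the already-established cyclic LGV formula (Theorem \ref{GV:thm}) over the disjoint decomposition $\mathcal{F}_{\mathcal{A}}^{\mathcal{I}}=\bigsqcup_{\mathcal{B}}\mathcal{F}_{\mathcal{A},\mathcal{B}}$. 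This is essentially Stembridge's original two-step strategy (LGV plus minor summation) transplanted to graphs with cycles, and it is sound: the common denominator $Z=\sum_{\mathbf{C}}\mathrm{sgn}(\mathbf{C})\mathrm{wt}(\mathbf{C})$ is the same for every $\mathcal{B}$, and the paper's convention that signs are computed relative to the natural order on $\mathcal{A}$ and $\mathcal{B}$ makes the termwise summation consistent. Note that both arguments ultimately rest on the same combinatorial fact, in your language $\pf(E_{\mathcal{B}\mathcal{B}})=1$ and in the paper's $\sum_{\sigma}(-1)^{\mathrm{cr}(\sigma)}=1$. What your modular route buys is reuse of Theorem \ref{GV:thm} and a cleaner separation of linear algebra from path combinatorics; what the paper's route buys is a uniform template (the matrix $S^{-1}$ with a $B$ block) that carries over verbatim to Theorems \ref{thm32} and \ref{thm41}, where a reduction to the plain LGV formula would be more awkward --- in particular in Theorem \ref{thm41} the denominator is no longer $\sum_{\mathbf{C}\in\mathcal{C}}$, so your factorization would not apply as stated there.
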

\begin{proof}
As before, we denote by $a_1 < \ldots < a_{2m}$ the elements of $\cal A$.
 Let us also introduce the $2n \times 2n$ skew-symmetric and invertible matrix
\beqa
\label{s31}
S^{-1}=
\begin{pmatrix}
0 &  {}^t M^{-1} \\
- M^{-1} & B
\end{pmatrix},
\eeqa
where the $n-$dimensional and skew-symmetric square matrix $B$ is defined by: $B_{ij}=1$ if $i, j\in \mathcal{I}$ and $i<j$, 
 $B_{ij}=-1$ if $i, j\in \mathcal{I}$ and $ i> j$, and $B_{ij}=0$ otherwise. We have then:
 \beq
 S = 
\begin{pmatrix}
M B {}^t M & - M \\
 {}^t M & 0
\end{pmatrix}
=
\begin{pmatrix}
Q^{\mathcal{I}} & - M \\
 {}^t M & 0
\end{pmatrix}  \,  .\eeq
As in the previous section, we make use of Lemma \ref{wick}, which now implies:
\beqa
\pf\left( Q^{\mathcal{I}}_{\mathcal{A}\mathcal{A}}\right) = \pf\left( S_{\mathcal{A}\mathcal{A}}\right) = \int d \mu_{S} (\chi) \chi_{a_1} \ldots \chi_{a_{2m}} \,.
\eeqa 
Using the Grassmann variable conjugate notation $\{ \theta_i , \bar\theta_i \}$, we can rewrite this equality in the form
\beqa\label{action}
\pf\left( Q^{\mathcal{I}}_{\mathcal{A}\mathcal{A}}\right) = \frac{1}{Z} \int [d\theta \cdot d \bar\theta ] \, \theta_{a_1} \ldots \theta_{a_{2m}} \, \exp\left( \sum_{i}^n \bar\theta_i \theta_i - \sum_{i,j = 1}^n \bar\theta_i A_{ij} \theta_j - \sum_{i,j\in \mathcal{I}\vert i<j} \bar\theta_i \bar\theta_j \right)\, ,
\eeqa
where:
\beqa
Z = \int [d\theta \cdot d \bar\theta ] \, \exp\left( \sum_{i}^n \bar\theta_i \theta_i - \sum_{i,j = 1}^n \bar\theta_i A_{ij} \theta_j - \sum_{i,j\in \mathcal{I}\vert i<j} \bar\theta_i \bar\theta_j \right) \,.
\eeqa
It is easy to see that, when expanding the integrand of $Z$ into monomials, all terms containing a factor $\bar\theta_i \bar\theta_j$ coming from the third sum in the exponential will vanish, since such terms will contain strictly more $\bar\theta$ variables than $\theta$ variables. Hence the third sum in the exponential does not contribute, and $Z$ is again a sum over non-intersecting cycles. As for the numerator of \eqref{action}, it can be expanded as a sum over subsets $\mathcal{B} \subset \mathcal{I}$ of size $2m$
\beq
\sum_{\mathcal{B} \subset \mathcal{I}, \vert \mathcal{B} \vert = 2m } (-1)^m \int [d\theta \cdot d \bar\theta ] \, \theta_{a_1} \ldots \theta_{a_{2m}} \, F_{\mathcal{B}} (\bar\theta) \, \exp\left(\sum_{i,j = 1}^n \bar\theta_i M^{-1}_{ij} \theta_j \right)\, ,
\eeq
where $F_{\mathcal{B}} (\bar\theta)$ is a weighted sum of monomials. More precisely, denoting by $b_1 < \ldots < b_{2m}$ the elements of $\mathcal{B}$, we have
\beq
F_{\mathcal{B}} (\bar\theta) = \sum_{(i_1 i_2) \ldots  (i_{2m-1} i_m)} \prod_{k = 1}^m \bar\theta_{b_{i_{2k-1}}} \bar\theta_{b_{i_{2k}}} \, ,
\eeq
where the sum runs over all possible fixed-point free involutions (also known as \emph{matchings} or \emph{pairings} in the literature) on $\{ 1, \ldots , 2m\}$, and we have furthermore adopted the convention that $i_{2k - 1} < i_{2k}$. We can then reorder the Grassmann variables appearing in each monomial to obtain
\beq
F_{\mathcal{B}} (\bar\theta) = \sum_{\mathrm{pairing} \, \sigma} (-1)^{{\mathrm cr}(\sigma)} \bar\theta_{b_1} \ldots \bar\theta_{b_m} \, ,
\eeq
where ${\mathrm cr}(\sigma)$ is the number of crossings in the fixed-point free involution $\sigma$. One may furthermore show, by induction on $m$, that  
\beq
\sum_{\mathrm{pairing} \, \sigma} (-1)^{{\mathrm cr}(\sigma)} = 1\,.
\eeq
Note that this formula can also be obtained as a particular case of the Touchard-Riordan formula (see \cite{t} and \cite{r}).
The numerator of \eqref{action} thus reduces to:
\beqa
\sum_{\mathcal{B} \subset \mathcal{I}, \vert \mathcal{B} \vert = 2m } \int [d\theta \cdot d \bar\theta ] \, [\bar\theta_\mathcal{B} \cdot \theta_\mathcal{A} ]  \, \exp\left(\sum_{i,j = 1}^n \bar\theta_i M^{-1}_{ij} \theta_j \right)\, .
\eeqa
Using now Lemma \ref{lemma:paths} completes the proof.
\end{proof}

Let us now consider the case in which some of the paths' endpoints are fixed. More precisely, for any $\mathcal{B} \subset V$ such that $\mathcal{B} \cap \mathcal{I} = \emptyset$, we define $\mathcal{F}_{{\cal A},{\cal B}}^{\mathcal{I}}$ as the set of flows from $\mathcal{A}$ to $\mathcal{B} \cup \mathcal{D}$, where $\mathcal{D}$ is an arbitrary subset of $\mathcal{I}$:
\beqa
\mathcal{F}_{{\cal A},{\cal B}}^{\mathcal{I}}:= \bigcup_{\mathcal{D} \subset \mathcal{I}
, \vert\mathcal{B} \cup \mathcal{D} \vert = \vert\mathcal{A}\vert} \mathcal{F}_{{\cal A},\mathcal{B} \cup \mathcal{D}}\,.
\eeqa
This is again a disjoint union of subsets; the functions $\mathrm{wt}$ and $\mathrm{sgn}$ are extended to $\mathcal{F}_{{\cal A},{\cal B}}^{\mathcal{I}}$ in a natural way.
For simplicity, we will only consider subsets $\mathcal{B}$ and $\mathcal{I}$ such that any element of $\mathcal{B}$ is strictly smaller than any element of $\mathcal{I}$, which we denote $\mathcal{B}<\mathcal{I}$. Note that this restriction does not affect the generality of Theorem \ref{thm32} below, since one is free to relabel the vertices of the graph in order to insure that $\mathcal{B}<\mathcal{I}$ holds. 

\begin{theorem}
\label{thm32}
Let $\mathcal{A}$, $\mathcal{B}$ and $\mathcal{I}$ be subsets of $\{ 1, \ldots , n\}$ such that: $\vert\mathcal{A}\vert = r$ and $\vert\mathcal{B}\vert = s$ with $r+s$ even, $s \leq r$, and $\mathcal{B} < \mathcal{I}$. One has: 
\beqa\label{formula:thm32}
\pf  
\begin{pmatrix}
Q^{\mathcal{I}}_{\mathcal{A}\mathcal{A}} & - M_{\mathcal{A}\mathcal{B}} \\
 {}^t M_{\mathcal{A}\mathcal{B}} & 0
\end{pmatrix}
 = 
\frac{\underset{(\mathbf{P}, \mathbf{C})\in \mathcal{F}_{{\cal A},{\cal B}}^{\mathcal{I}}}{\sum} \mathrm{sgn}(\mathbf{P})\mathrm{wt}(\mathbf{P}) \, \mathrm{sgn}(\mathbf{C})\mathrm{wt}(\mathbf{C})}{\underset{\mathbf{C} \in \mathcal{C}}{\sum} \mathrm{sgn}(\mathbf{C}) \mathrm{wt}(\mathbf{C})}\, .
\eeqa
\end{theorem}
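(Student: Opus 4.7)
The plan is to run the argument of Theorem~\ref{thm31} almost verbatim, using the \emph{same} $2n \times 2n$ skew-symmetric matrix $S$ with
\[
S = \begin{pmatrix} Q^{\mathcal{I}} & -M \\ {}^tM & 0 \end{pmatrix}, \qquad S^{-1} = \begin{pmatrix} 0 & {}^tM^{-1} \\ -M^{-1} & B \end{pmatrix},
\]
but now applying Lemma~\ref{wick} with the index set $\mathcal{J} := \mathcal{A} \cup \overline{\mathcal{B}}$, where $\overline{\mathcal{B}} := \{n + b \mid b \in \mathcal{B}\}$. A direct inspection of the block structure shows that $S_{\mathcal{J}\mathcal{J}}$ is exactly the matrix whose Pfaffian is sought in \eqref{formula:thm32}, so that Lemma~\ref{wick} rewrites the left-hand side as a Gaussian Grassmann integral of $\chi_{a_1}\cdots\chi_{a_r}\chi_{n+b_1}\cdots\chi_{n+b_s}$ against $d\mu_S$ (up to an overall sign depending only on $r,s$).

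Passing to the conjugate notation $\theta_i = \chi_i$, $\bar\theta_i = \chi_{n+i}$, the integral takes the form
\[
\frac{1}{Z}\int [d\theta \cdot d\bar\theta]\, \theta_{a_1}\cdots\theta_{a_r}\,\bar\theta_{b_1}\cdots\bar\theta_{b_s}\,\exp\!\left(\sum_i \bar\theta_i\theta_i - \sum_{i,j}\bar\theta_i A_{ij}\theta_j - \sum_{\substack{i<j \\ i,j\in\mathcal{I}}}\bar\theta_i\bar\theta_j\right),
\]
with $Z$ the same normalization as in Theorem~\ref{thm31}. The denominator is untouched by the argument of that theorem: the pairing term $-\bar\theta_i\bar\theta_j$ supplies an excess of $\bar\theta$'s over $\theta$'s and so drops out by Grassmann parity, leaving $Z = \sum_{\mathbf{C}\in\mathcal{C}}\mathrm{sgn}(\mathbf{C})\mathrm{wt}(\mathbf{C})$. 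In the numerator, the $r-s$ excess of explicit $\theta$'s over explicit $\bar\theta$'s forces exactly $k := (r-s)/2$ factors to be drawn from the pairing sum. Grouping these contributions by the underlying set $\mathcal{D}\subset\mathcal{I}$ of size $2k$ of indices they touch, the same Touchard--Riordan-type identity $\sum_{\mathrm{pairing}\,\sigma}(-1)^{\mathrm{cr}(\sigma)} = 1$ used in Theorem~\ref{thm31} collapses the pairings of $\mathcal{D}$ to the single ordered monomial $\bar\theta_{d_1}\cdots\bar\theta_{d_{2k}}$ with $d_1<\cdots<d_{2k}$.

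This is where the hypothesis $\mathcal{B} < \mathcal{I}$ enters: it guarantees $b_1<\cdots<b_s<d_1<\cdots<d_{2k}$, so that the concatenated monomial $\bar\theta_{b_1}\cdots\bar\theta_{b_s}\bar\theta_{d_1}\cdots\bar\theta_{d_{2k}}$ is already in the natural order of $\mathcal{B}\cup\mathcal{D}$, and the reordering into the interleaved form $[\bar\theta_{\mathcal{B}\cup\mathcal{D}}\cdot\theta_{\mathcal{A}}]$ produces a sign that depends only on $r$ (not on $\mathcal{D}$). For each fixed $\mathcal{D}$, Lemma~\ref{lemma:paths} then converts the remaining integral into the weighted, signed sum over $\mathcal{F}_{\mathcal{A},\mathcal{B}\cup\mathcal{D}}$, and summing over $\mathcal{D}\subset\mathcal{I}$ assembles, by the disjoint union $\mathcal{F}_{\mathcal{A},\mathcal{B}}^{\mathcal{I}} = \bigsqcup_{\mathcal{D}} \mathcal{F}_{\mathcal{A},\mathcal{B}\cup\mathcal{D}}$, the numerator of \eqref{formula:thm32}. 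The main obstacle I anticipate is bookkeeping the various signs — the Pfaffian sign from ordering $\mathcal{J}$, the $(-1)^k$ from the $k$-fold expansion of the pairing term, and the fermionic reordering sign — and checking that they conspire with the overall factor coming from Lemma~\ref{wick} to give a net $+1$; the assumption $\mathcal{B}<\mathcal{I}$ is precisely what makes this cancellation uniform in $\mathcal{D}$.
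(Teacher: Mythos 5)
Your proposal is correct and follows essentially the same route as the paper: the same matrix $S$ from Theorem \ref{thm31}, Lemma \ref{wick} applied to the index set $\mathcal{A}\cup\overline{\mathcal{B}}$, the expansion of the pairing term over subsets $\mathcal{D}\subset\mathcal{I}$ of size $r-s$ collapsed via the crossing identity, the hypothesis $\mathcal{B}<\mathcal{I}$ used to keep the reordering sign uniform in $\mathcal{D}$, and Lemma \ref{lemma:paths} to finish. The sign bookkeeping you flag as a remaining concern is also left implicit in the paper's own proof.
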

\begin{proof}
We introduce the same matrix $S$ as in the proof of Theorem \ref{thm31}, and compute 
\begin{align}
\pf  
\begin{pmatrix}
Q^{\mathcal{I}}_{\mathcal{A}\mathcal{A}} & - M_{\mathcal{A}\mathcal{B}} \\
 {}^t M_{\mathcal{A}\mathcal{B}} & 0
\end{pmatrix}
 = \pf\left( S_{\mathcal{A}\cup \overline{\mathcal{B}} \mathcal{A}\cup \overline{\mathcal{B}}}\right)\,.
\end{align}
Using again Lemma \ref{wick} yields the same denominator as before, denominator which expands as a sum over nonintersecting cycles. The numerator can be written in the form
\beq
\sum_{\mathcal{D} \subset \mathcal{I}, 
\vert \mathcal{D} \vert = r-s } (-1)^{\frac{r-s}{2}} \int [d\theta \cdot d \bar\theta ] \, \theta_{a_1} \ldots \theta_{a_r} \, \bar\theta_{b_1} \ldots \bar\theta_{b_s} \, F_{\mathcal{D}} (\bar\theta) \, \exp\left(\sum_{i,j = 1}^n \bar\theta_i M^{-1}_{ij} \theta_j \right)\,,\nonumber
\eeq
where $F_{\mathcal{D}} (\bar\theta) = \bar\theta_{d_1} \ldots \bar\theta_{d_{r-s}}$ and $d_1 < \ldots < d_{r-s}$ are the elements of $\mathcal{D}$. Since $\mathcal{B} < \mathcal{D}$ for any $\mathcal{D}$ appearing in the previous formula, no extra sign needs to be included. The proof then goes along the lines of the proof of Theorem \ref{thm31}.
\end{proof}

\section{A generalization}

In this section we provide a generalization of the results of the previous sections. This generalization is, up to our knowledge, not existing in the combinatorics literature, even for acyclic digraphs.

Let us now consider two subsets $\mathcal{I}, \mathcal{J}\subset V$ such that $\mathcal{A}<\mathcal{I}$ and $\mathcal{B} < \mathcal{J}$. We denote by $\mathcal{F}_{{\cal A},{\cal B}}^{\mathcal{I},\mathcal{J}}$ the set of self-avoiding flows which respect the following conditions:
for each vertex $a\in\cal A$, there exists a path starting at $a$;
for each vertex $b\in \cal B$, there exists a path ending at $b$;
 and, finally, all other paths' starting points (resp. endpoints) are in $\mathcal{I}$ (resp. in $\mathcal{J}$). 
This means that we can define
$\mathcal{F}_{{\cal A},{\cal B}}^{\mathcal{I},\mathcal{J}}$ by the disjoint union:
\beqa
\mathcal{F}_{{\cal A},{\cal B}}^{\mathcal{I},\mathcal{J}}:= \bigcup_{\mathcal{A}' \subset \mathcal{I}, 
\mathcal{B}' \subset \mathcal{J}, 
\vert\mathcal{A} \cup \mathcal{A}' \vert = \vert\mathcal{B} \cup \mathcal{B}' \vert } \mathcal{F}_{{\cal A}\cup\mathcal{A'},\mathcal{B} \cup \mathcal{B}'}\,.
\eeqa
Once again, the sign and weight functions are naturally extended to $\mathcal{F}_{{\cal A},{\cal B}}^{\mathcal{I},\mathcal{J}}$. 
Following the notation of the previous section, for any $\mathcal{K} \subset V$ we denote by $B^\mathcal{K}$ the $n \times n$ matrix such that: $B^\mathcal{K}_{ij}=1$ if $i, j\in \mathcal{K}$ and $i<j$, $B^\mathcal{K}_{ij}=-1$ if $i, j\in \mathcal{I}$ and $ i> j$, and $B^\mathcal{K}_{ij} = 0$ otherwise.

We can now state the main result of this section:
\begin{theorem}
\label{thm41}
Let $\mathcal{A}$, $\mathcal{B}$, $\mathcal{I}$ and $\mathcal{J}$ be subsets of $\{ 1, \ldots , n\}$ such that: $\vert\mathcal{A}\vert = r$, $\vert\mathcal{B}\vert = s$ with $r+s$ even,  
$\mathcal{A} < \mathcal{I}$ and $\mathcal{B} < \mathcal{J}$. 
One has
\beqa
\pf \begin{pmatrix}
P_{\mathcal{A}\mathcal{A}} & - R_{\mathcal{A}\mathcal{B}} \\
{}^t R_{\mathcal{A}\mathcal{B}} & Q_{\mathcal{B}\mathcal{B}} 
\end{pmatrix} = 
\frac{\underset{(\mathbf{P}, \mathbf{C})\in \mathcal{F}_{{\cal A},{\cal B}}^{\mathcal{I},\mathcal{J}}}{\sum} \mathrm{sgn}(\mathbf{P})\mathrm{wt}(\mathbf{P}) \, \mathrm{sgn}(\mathbf{C})\mathrm{wt}(\mathbf{C})}{\underset{(\mathbf{P}, \mathbf{C})\in \mathcal{F}_{\emptyset,\emptyset}^{\mathcal{I},\mathcal{J}}}{\sum} \mathrm{sgn}(\mathbf{P})\mathrm{wt}(\mathbf{P}) \, \mathrm{sgn}(\mathbf{C})\mathrm{wt}(\mathbf{C})}\,,
\eeqa
where the $n\times n$ matrices $P,Q, R$ are defined by
\begin{align}
R := M \left( 1 + B^{\mathcal{J}} {}^t M B^{\mathcal{I}} M \right)^{-1}\,, \qquad P := M B^{\mathcal{J}} {}^t R\,, \qquad Q := {}^t M B^{\mathcal{I}} R\,.
\end{align} 
\end{theorem}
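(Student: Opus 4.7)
The proof will follow the template of Theorem \ref{thm32}, but with both the ``free starting point'' and ``free endpoint'' sectors switched on at once. First I would introduce the $2n\times 2n$ skew-symmetric matrix
$$S^{-1} := \begin{pmatrix} B^{\mathcal{I}} & {}^tM^{-1} \\ -M^{-1} & B^{\mathcal{J}} \end{pmatrix}$$
and verify by block-matrix algebra that its inverse is $S = \begin{pmatrix} P & -R \\ {}^tR & Q \end{pmatrix}$. The two a priori different expressions for $R$ that arise from the (1,1) and (2,2) blocks of $SS^{-1}=I$ are reconciled via the identity $(I + MB^{\mathcal{J}}{}^tMB^{\mathcal{I}})M = M(I + B^{\mathcal{J}}{}^tMB^{\mathcal{I}}M)$, and skew-symmetry of $P,Q$ follows from that of $B^{\mathcal{I}}, B^{\mathcal{J}}$ together with the transpose relations $P = MB^{\mathcal{J}}{}^tR = RB^{\mathcal{J}}{}^tM$ and $Q = {}^tMB^{\mathcal{I}}R = {}^tRB^{\mathcal{I}}M$.

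Next, I would apply Lemma \ref{wick} with index set $\mathcal{A}\cup\overline{\mathcal{B}}$, where $\overline{\mathcal{B}} := \{n+b : b\in \mathcal{B}\}$, and switch to the conjugate notation $\theta_i := \chi_i$, $\bar\theta_i := \chi_{n+i}$. Expanding $-\tfrac12 \sum \chi_i S^{-1}_{ij}\chi_j$ and using the anticommutativity of Grassmann variables, the action takes the form
$$\mathcal{S}(\theta,\bar\theta) = \sum_i \bar\theta_i \theta_i - \sum_{i,j} \bar\theta_i A_{ij}\theta_j - \sum_{i,j\in\mathcal{I},\,i<j}\theta_i\theta_j - \sum_{i,j\in\mathcal{J},\,i<j}\bar\theta_i\bar\theta_j\,,$$
so that the left-hand side of the theorem becomes, up to the familiar sign from reading off the $\mathcal{A}\cup\overline{\mathcal{B}}$ submatrix, a ratio of Grassmann integrals of precisely the type treated in Theorems \ref{thm31} and \ref{thm32}.

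I would then expand the two ``pairing'' quadratic terms in the exponential. As in the proof of Theorem \ref{thm32}, the non-vanishing Grassmann monomials are indexed by pairs of subsets $\mathcal{A}'\subset\mathcal{I}$, $\mathcal{B}'\subset\mathcal{J}$ with $|\mathcal{A}\cup\mathcal{A}'| = |\mathcal{B}\cup\mathcal{B}'|$, each paired with a fixed-point free involution. Reordering into canonical order, the two signed sums over involutions decouple and collapse independently via the Touchard-Riordan identity $\sum_\sigma(-1)^{\mathrm{cr}(\sigma)}=1$; the hypotheses $\mathcal{A}<\mathcal{I}$ and $\mathcal{B}<\mathcal{J}$ are invoked at this precise point to guarantee that concatenating $\theta_{\mathcal{A}'}$ after $\theta_{\mathcal{A}}$ (and $\bar\theta_{\mathcal{B}'}$ after $\bar\theta_{\mathcal{B}}$) does not produce a spurious sign. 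Applying Lemma \ref{lemma:paths} to each remaining integral identifies the numerator with the signed weighted sum over $\mathcal{F}_{\mathcal{A},\mathcal{B}}^{\mathcal{I},\mathcal{J}}$. The same procedure applied to the normalization $Z$, but with no $\theta/\bar\theta$ insertions, yields the sum over $\mathcal{F}_{\emptyset,\emptyset}^{\mathcal{I},\mathcal{J}}$, since now both quadratic terms may contribute non-trivially (in contrast with Theorem \ref{thm31}, where only cycles survived).

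The main obstacle is the sign bookkeeping. Unlike Theorem \ref{thm32}, which involved a single pairing sector, here two independent Touchard-Riordan cancellations must combine with the block-Pfaffian sign arising from Lemma \ref{wick} and with the reordering signs on both $\theta_{\mathcal{A}\cup\mathcal{A}'}$ and $\bar\theta_{\mathcal{B}\cup\mathcal{B}'}$, without leaving a residual prefactor. The ordering conditions $\mathcal{A}<\mathcal{I}$ and $\mathcal{B}<\mathcal{J}$ are precisely what make this accounting clean on both sides; verifying this is the only non-routine step.
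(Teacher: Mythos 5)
Your proposal is correct and follows essentially the same route as the paper, which itself only sketches the argument: the same matrix $S^{-1}$ with blocks $B^{\mathcal{I}}$ and $B^{\mathcal{J}}$, Lemma \ref{wick} applied to the index set $\mathcal{A}\cup\overline{\mathcal{B}}$, and the expansion-plus-Touchard--Riordan mechanism of Theorems \ref{thm31} and \ref{thm32} run in both the $\theta$ and $\bar\theta$ sectors. You in fact supply more detail than the paper does, in particular the push-through identity reconciling the two expressions for $R$ and the observation that the normalization $Z$ now produces $\mathcal{F}_{\emptyset,\emptyset}^{\mathcal{I},\mathcal{J}}$ rather than just $\mathcal{C}$.
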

\begin{proof}
Let us give here the main idea of the proof.
We use again Lemma \ref{wick}, but this time we take: 
\beqa\label{s41}
S^{-1}=
\begin{pmatrix}
B_\mathcal{I} & {}^t M^{-1} \\
- M^{-1} & B_\mathcal{J}
\end{pmatrix}\,,\qquad 
S=
\begin{pmatrix}
P & - R \\
{}^t R & Q
\end{pmatrix}.
\eeqa
The proof then goes along the lines of the proof of Theorem \ref{thm31}.
\end{proof}

\begin{corollary}
\label{coro}
Making the same assumptions as in Theorem \ref{thm41} and further assuming that there exists no path connecting $\mathcal{I}$ to $\mathcal{J}$, one has:
\beqa
\pf 
\begin{pmatrix}
Q^\mathcal{J}_{\mathcal{A} \mathcal{A}} & - M_{\mathcal{A} \mathcal{B}} \\
{}^t M_{\mathcal{A} \mathcal{B}} & {}^t Q^{\mathcal{I}}_{\mathcal{B} \mathcal{B}} 
\end{pmatrix} = 
\frac{\underset{(\mathbf{P}, \mathbf{C})\in \mathcal{F}_{{\cal A},{\cal B}}^{\mathcal{I},\mathcal{J}}}{\sum} \mathrm{sgn}(\mathbf{P})\mathrm{wt}(\mathbf{P}) \, \mathrm{sgn}(\mathbf{C})\mathrm{wt}(\mathbf{C})}{\underset{\mathbf{C}\in \mathcal{C}}{\sum} \mathrm{sgn}(\mathbf{C})\mathrm{wt}(\mathbf{C})}\,.
\eeqa
\end{corollary}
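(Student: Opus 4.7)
The plan is to derive the corollary directly from Theorem \ref{thm41} by using the additional hypothesis to simplify the matrices $R$, $P$, $Q$ and to collapse the denominator into a sum over cycle configurations only.

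The key algebraic observation is that, under the assumption that no path connects $\mathcal{I}$ to $\mathcal{J}$, one has $B^{\mathcal{J}}\,{}^tM\,B^{\mathcal{I}} = 0$. Indeed, the $(i,j)$ entry of this product reads $\sum_{k,l} B^{\mathcal{J}}_{ik}\, M_{lk}\, B^{\mathcal{I}}_{lj}$, which can be nonzero only if $i,k \in \mathcal{J}$ and $l,j \in \mathcal{I}$; in that case the weight $M_{lk}$ is a formal sum of path weights from $l \in \mathcal{I}$ to $k \in \mathcal{J}$, which vanishes by hypothesis. Consequently the inverse in the definition of $R$ becomes trivial, yielding $R = M$, $P = M B^{\mathcal{J}}\,{}^tM$ and $Q = {}^tM\, B^{\mathcal{I}}\, M$. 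A direct expansion gives $(M B^{\mathcal{J}}\,{}^tM)_{ij} = \sum_{k<l,\ k,l\in\mathcal{J}} (M_{ik}M_{jl} - M_{il}M_{jk}) = Q^{\mathcal{J}}_{ij}$, and analogously $({}^tM\, B^{\mathcal{I}}\, M)_{ij} = \sum_{k<l,\ k,l\in\mathcal{I}} (M_{ki}M_{lj} - M_{li}M_{kj})$, which is precisely the matrix denoted ${}^tQ^{\mathcal{I}}$ in the statement (namely, the matrix $Q^{\mathcal{I}}$ obtained by replacing $M$ with ${}^tM$, which encodes paths entering $\mathcal{B}$ from $\mathcal{I}$). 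Restricting to the blocks indexed by $\mathcal{A}$ and $\mathcal{B}$ recovers the Pfaffian argument on the left-hand side of the corollary.

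To complete the proof, one must show that the denominator of Theorem \ref{thm41} reduces to $\sum_{\mathbf{C}\in\mathcal{C}} \mathrm{sgn}(\mathbf{C})\,\mathrm{wt}(\mathbf{C})$. By definition, an element of $\mathcal{F}_{\emptyset,\emptyset}^{\mathcal{I},\mathcal{J}}$ is a self-avoiding flow whose path sources all lie in $\mathcal{I}$ and whose path targets all lie in $\mathcal{J}$. Under the hypothesis, no such individual path exists, so every flow in $\mathcal{F}_{\emptyset,\emptyset}^{\mathcal{I},\mathcal{J}}$ must have an empty path component and therefore reduce to a collection of self-avoiding, pairwise vertex-disjoint cycles. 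This yields a canonical sign- and weight-preserving bijection $\mathcal{F}_{\emptyset,\emptyset}^{\mathcal{I},\mathcal{J}} \simeq \mathcal{C}$, which gives the claimed form of the denominator.

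The main care point, and arguably the only real obstacle, is the notational matching in the lower-right block: the matrix $Q_{\mathcal{B}\mathcal{B}}$ of Theorem \ref{thm41} naturally describes paths entering $\mathcal{B}$ from $\mathcal{I}$, while $Q^{\mathcal{I}}$ as originally defined in the previous section encodes paths leaving a fixed set of vertices toward $\mathcal{I}$. One must therefore be explicit that the symbol ${}^tQ^{\mathcal{I}}$ in the statement refers to the $Q$-matrix computed from ${}^tM$ rather than to the transpose of the skew-symmetric matrix $Q^{\mathcal{I}}$. Once this convention is fixed, the algebraic simplifications above, combined with Theorem \ref{thm41}, immediately yield the corollary.
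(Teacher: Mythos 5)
Your proposal is correct and follows essentially the same route as the paper's own proof: observe that the hypothesis forces the product of $B$-matrices sandwiching $M$ (equivalently ${}^tM$) to vanish, so that $R=M$, $P=Q^{\mathcal{J}}$ and $Q={}^tQ^{\mathcal{I}}$, and then note that $\mathcal{F}_{\emptyset,\emptyset}^{\mathcal{I},\mathcal{J}}$ collapses to $\mathcal{C}$. Your explicit remark that ${}^tQ^{\mathcal{I}}$ must be read as the $Q$-matrix built from ${}^tM$ rather than as the literal transpose of the skew-symmetric matrix $Q^{\mathcal{I}}$ is a useful clarification that the paper leaves implicit, but it does not change the argument.
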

\begin{proof}
We remark that $B^{\mathcal{I}} M B^{\mathcal{J}}= 0$. Hence the matrix $S$ 
given in \eqref{s41} 
reduces to: 
\beqa
S=
\begin{pmatrix}
Q^\mathcal{J} & - M \\
{}^t M & {}^t Q^{\mathcal{I}} 
\end{pmatrix}.
\eeqa
The result follows since $\mathcal{F}_{\emptyset,\emptyset}^{\mathcal{I},\mathcal{J}} = \mathcal{F}_{\emptyset,\emptyset}$, and is therefore identified with $\mathcal{C}$.
\end{proof}

%

\acknowledgements{The authors warmly thank Xavier Viennot for suggesting them to tackle this topic using Grassmann algebra tools, and Thomas Krajewski for stimulating discussions. \\
A. T. is partially supported by the ANR JCJC CombPhysMat2Tens and PN 09 37
01 02 grants.
This research was supported in part by Perimeter Institute for Theoretical Physics. Research at Perimeter Institute is supported by the Government of Canada through the Department of Innovation, Science and Economic Development Canada and by the Province of Ontario through the Ministry of Research, Innovation and Science.

\printbibliography

\end{document}